\newcommand\scalemath[2]{\scalebox{#1}{\mbox{\ensuremath{\displaystyle #2}}}}
\newcommand{\half}{\frac{1}{2}}
\newcommand{\Div}{\mbox{\rm{\bf{div\,}}}}
\newcommand\changed[1]{{\color{black} #1}}
\begin{document}
\mainmatter              %
\title{Anisotropic Diffusion Stencils:\\
From Simple Derivations over Stability Estimates to ResNet Implementations
\thanks{This work has received funding from the European Research Council (ERC) 
under the European Union’s Horizon 2020 research and innovation programme 
(grant agreement no. 741215, ERC Advanced Grant INCOVID).}}
\titlerunning{Anisotropic Diffusion Stencils}  
\author{Karl Schrader \and Joachim Weickert \and Michael Krause}
\authorrunning{Karl Schrader, Joachim Weickert, and Michael Krause} 
\tocauthor{Karl Schrader, Joachim Weickert, Michael Krause}
\institute{Mathematical Image Analysis Group, 
 Faculty of Mathematics and Computer Science,\\
 Campus E1.7, Saarland University, 66041 Saarbrücken, Germany\\
\email{\{schrader,weickert,krause\}@mia.uni-saarland.de}
}

\maketitle              %

\begin{abstract}
Anisotropic diffusion processes with a diffusion tensor are  
important in image analysis, physics, and engineering. However, their 
numerical approximation has a strong impact on dissipative artefacts 
and deviations from rotation invariance. 
In this work, we study a large family of finite difference discretisations 
on a $3\times3$ stencil. We derive it by splitting 2-D anisotropic diffusion
into four 1-D diffusions. The resulting stencil class involves one free 
parameter and covers a wide range of existing discretisations. It comprises 
the full stencil family of Weickert et al.~(2013) and shows that their
two parameters contain redundancy.  
Furthermore, we establish a bound on the spectral norm of the matrix 
corresponding to the stencil. 
This gives time step size limits that guarantee stability of an explicit
scheme in the Euclidean norm.
Our directional splitting also allows a very natural translation of the
explicit scheme into ResNet blocks. Employing neural network libraries 
enables simple and highly efficient parallel implementations on GPUs. 
\end{abstract}
\section{Introduction}
Anisotropic diffusion models with a diffusion tensor have numerous
applications in physics and engineering. Moreover, they also play a 
fundamental role in image analysis \cite{We97}, where they are used 
for denoising, enhancement, scale-space analysis, and various 
interpolation tasks such as inpainting 
and superresolution.
Sophisticated nonlinear models with appropriate directional behaviour 
can close interrupted structures and maintain or create sharp edges.  
However, to achieve results with only few dissipative artefacts 
and good rotation invariance, appropriate numerical approximations
are needed. 
They should also come with provable stability guarantees and
lead in a natural way to efficient implementations. Ideally they
should also exploit the impressive parallelisation potential of 
modern GPUs. 
The goal of our contribution is to address these numerical issues. 

\medskip
\noindent
\textbf{Our Contributions.}
Motivated by image analysis applications, where one has a regular
pixel grid and aims at simple numerical algorithms, we consider
finite difference approximations on a $3 \times 3$ stencil. However, 
our results are also useful for anisotropic diffusion problems
in other areas. Our contributions are threefold:

First, we study space discretisations of a general anisotropic 
diffusion operator on a $3 \times 3$ stencil. 
They split the 2-D anisotropic process into four 1-D diffusions. This 
class has one free parameter that can be used for quality optimisation.
It covers the two-parameter stencil family of Weickert et al.~\cite{WWW13}, 
while removing its parameter redundancy and offering a simpler derivation. 
Moreover, it subsumes many previous discretisations with second-order 
consistency. 

Our second contribution consists of a detailed stability analysis, where 
we establish fairly tight bounds on the spectral norm of the matrix
associated with the stencil family. It allows to derive time step
size restrictions for the corresponding explicit scheme (and
accelerations that rely on it). 

Last but not least, our stencil derivation based on a directional 
splitting enables the translation of the explicit anisotropic diffusion
scheme into a ResNet block~\cite{HZRS16}, which is a highly popular 
component of neural networks. This showcases that ideas are often 
shared between numerical schemes and neural architectures. More
importantly, it allows simple and fast parallel implementations 
of anisotropic diffusion on GPUs using neural network libraries 
such as PyTorch. 

\medskip
\noindent
\textbf{Related Work.}
Many finite difference discretisations for anisotropic diffusion
processes exist in the literature. Often they use spatial discretisations 
on a $3 \times 3$ stencil with consistency order two. The stencil class
of Weickert et al.~\cite{WWW13} comprises seven of them. 
Our findings offer a simpler derivation and representation of this
family. Moreover, we extend the results from \cite{WWW13} by establishing 
concrete time step size limits for explicit schemes, connecting these 
algorithms to neural networks, and exploring simple and efficient 
parallelisations.

Our stencil family originates from a splitting {2-D} anisotropic diffusion
into four {1-D} diffusions along fixed directions. Earlier splittings of 
this type intended to derive discretisations that are stable in the 
maximum norm~\cite{We97,MN01a}. 
In general this is only possible for fairly mild anisotropies 
\cite{We97}. We consider stencils that offer stability in the 
Euclidean norm for all anisotropies.

Recent works \cite{ASAPW21,ASWPA22,RDF20,RH20} connect explicit schemes 
for partial differential equations (PDEs) to the ResNet~\cite{HZRS16} 
architecture. For example, Alt et al.~\cite{ASAPW21} show that evolutions of 
discretised 1-D diffusion models with a scalar-valued diffusivity can be 
represented as ResNet blocks. 
In \cite{ASWPA22}, they also explore the 2-D anisotropic case. 
However, their methodology is limited to evolution equations that arise as
gradient descent of an energy functional. This excludes popular methods like 
edge-enhancing diffusion~\cite{We97}, for which Welk~\cite{We21} has 
shown that no energy functional exists. We can translate these 
methods as well.

\medskip
\noindent
\textbf{Organisation of the Paper.} 
In Section~2, we derive a class of finite difference discretisations on a 
$3\times3$ stencil. We establish stability results for the corresponding
explicit scheme in Section~3. The fourth section shows how our splitting 
into 1-D diffusions leads to a translation of this scheme to a ResNet 
architecture, and it analyses its performance on a GPU. We conclude our 
paper in Section~5.

\section{Discretising Anisotropic Diffusion with the $\bm \delta$-Stencil }
In this section, we study a simple and fairly general approach for a space 
discretisation of anisotropic diffusion on a $3 \times 3$ stencil. It is based 
on a directional splitting into four 1-D diffusion processes, which we discuss
first. 

\medskip
\noindent
\textbf{1-D Diffusion.}
To denoise a 1-D signal \changed{$f: [a, b] \to \mathbb{R}$}, one can create
simplified versions $\{u(x, t)\,|\, t \ge 0\}$ of it with the
nonlinear diffusion process~\cite{PM90}
\begin{align}
\partial_t u 
&\;=\; \partial_x\Big(g\big((\partial_x u)^2\big)\, \partial_x u\Big) 
  & (t> 0), 
\label{eq:diffusion_1d_cont}\\
u(x,0) &\;=\; f(x)\;.
\end{align}
Larger diffusion times $t$ correspond to more pronounced simplifications.
The diffusivity $g: \mathbb{R}\to (0,1]$ is a function that decreases in 
its argument $(\partial_x u)^2$ in order to preserve discontinuities. 
At the domain boundaries $a$ and $b$, we impose reflecting boundary conditions.
To prepare for the later translation to a neural architecture, we introduce 
the flux function $\,\Phi(\partial_x u) = g((\partial_x u)^2)\,\partial_x u$. 
It leads to the evolution equation 
$\,\partial_t u = \partial_x(\Phi(\partial_x u))$.

A finite difference discretisation of this 1-D process serves as building 
block for discretising anisotropic diffusion. To obtain a discrete signal 
$\bm u = (u_i) \in \mathbb{R}^N$, we sample $u$ with grid size $h$. We 
discretise the derivatives with a forward difference in time and for 
the inner spatial derivative, and a backward difference for the outer one. 
This leads to the explicit scheme
\begin{equation}
\frac{u^{k+1}_i - u^k_i}{\tau} \;=\; 
\frac{1}{h}\left(
  \Phi\left(\frac{u^k_{i+1} - u^k_{i}}{h}\right) 
- \Phi\left(\frac{u^k_{i} - u^k_{i-1}}{h}\right) 
\right), 
\label{eq:iso_explicit}
\end{equation}
where $\tau>0$ is the time step size, $i$ denotes the location, and $k$ 
the time level.

\medskip
\noindent
\textbf{Anisotropic Diffusion.}
In image analysis, anisotropic diffusion with a diffusion tensor~\cite{We97} 
creates filtered versions $u(\bm x, t)$ of a scalar-valued (i.e.~greyscale) 
image $f(\bm x)$ by evolving it with the PDE 
\begin{align}
\partial_t u &\;=\; \Div(\bm D \, \bm\nabla u), \qquad 
\bm D \;=\; \begin{pmatrix}a \,&\, b \\ b \,&\, c \end{pmatrix} 
\label{eq:aniso_continuous}
\end{align}
where we initialise $u(\bm x, 0)$ with $f(\bm x)$ and use reflecting 
boundary conditions. The diffusion tensor $\bm D \in \mathbb{R}^{2 \times 2}$ 
is symmetric, positive semidefinite with at least one positive eigenvalue.
$\bm D$ may depend on Gaussian-smoothed first order derivatives of the 
evolving image $u$. This allows to enhance edges and coherent flow-like 
structures by smoothing along them, but not perpendicular to them~\cite{We97}.

\medskip
\noindent
\textbf{The $\bm\delta$-Stencil.}
The discrete setting considers images $\bm f, \bm{u}^k \in \mathbb{R}^N$
obtained by sampling $f$ and $u(.,k\tau)$ with a grid size of $h$ and
arranging the pixel values into column vectors. The key idea of our 
discretisation is the decomposition of an anisotropic 2-D diffusion 
process into a sum of four nonlinear 1-D diffusions along the axial 
and diagonal directions
\begin{equation}
\bm e_0 = \begin{pmatrix}1\\0\end{pmatrix}, \quad
\bm e_1 = \frac{1}{\sqrt{2}}\begin{pmatrix}1\\1\end{pmatrix}, \quad
\bm e_2 = \begin{pmatrix}0\\1\end{pmatrix}, \quad
\bm e_3 = \frac{1}{\sqrt{2}}\begin{pmatrix}-1\\1\end{pmatrix}.
\end{equation}
We determine directional diffusivities $w_0, \dots, w_3$ for the corresponding 
directions by solving the system of three equations with four unknowns arising 
from
\begin{equation}
\Div \left(\begin{pmatrix}a\,&\,b\\b\,&\,c\end{pmatrix}\bm\nabla u\right) 
\stackrel{!}{\;=\;} 
\sum_{i=0}^{3} \partial_{\bm e_i}\left(w_i\, \partial_{\bm e_i} u\right).
\end{equation}
Its solution has one free parameter which we call $\delta$:
\begin{equation}
w_0 = a-\delta, \qquad
w_1 = \delta+b, \qquad
w_2 = c-\delta, \qquad
w_3 = \delta-b \,.
\end{equation}
All four 1-D diffusion processes can be discretised as before in 
\eqref{eq:iso_explicit}. Each direction uses three pixels of its 
$3\times 3$ neighbourhood. Discretising e.g.
$\partial_{\bm e_1}(w_1\,\partial_{\bm e_1} u)$ in the pixels 
$(i\!-\!1, j\!-\!1)$, $(i,j)$, and $(i\!+\!1, j\!+\!1)$ at distance 
$h\sqrt{2}$ gives 
\begin{equation}
\frac{1}{h\sqrt{2}}\left((\delta+b)_{i+\frac{1}{2}, j+\frac{1}{2}} 
\frac{u_{i+1, j+1}-u_{i,j}}{h\sqrt{2}} - 
(\delta+b)_{i-\frac{1}{2},  j-\frac{1}{2}} 
\frac{u_{i, j}-u_{i-1,j-1}}{h\sqrt{2}}\right).
\end{equation}
Incorporating all four directions yields the following 
{\bf ${\bm \delta}$-stencil} for $\Div(\bm D \, \bm \nabla u)$:
\vspace{1mm}
\renewcommand{\arraystretch}{1.5}
\begin{equation}
\scalemath{.88}{\kern-.6em
\frac{1}{h^2} \cdot \,
\begin{array}{|c|c|c|} \hline
  \mbox{\rule[-4ex]{0pt}{8ex}}
  \begin{array}{l}
    \tfrac{1}{2}\,(\delta-b)_{i-\half,j+\half}
  \end{array} &
  \begin{array}{l}
    (c-\delta)_{i,j+\half}
  \end{array} &
  \begin{array}{l}
    \tfrac{1}{2}\,(\delta+b)_{i+\half,j+\half}
  \end{array}
  \\ \hline
  \mbox{\rule[-10ex]{0pt}{20ex}}
  \begin{array}{l}
    (a-\delta)_{i-\half,j}
  \end{array} ~ &
  \begin{array}{c}
    {}-(a-\delta)_{i+\half,j}\,-\,(a-\delta)_{i-\half,j}\\ 
    {}-\tfrac{1}{2}\,(\delta+b)_{i+\half,j+\half}
    \,-\,\tfrac{1}{2}\,(\delta+b)_{i-\half,j-\half}\\ 
    {}-(c-\delta)_{i,j+\half}\,-\,(c-\delta)_{i,j-\half}\\
    {}-\tfrac{1}{2}\,(\delta-b)_{i-\half,j+\half}
    \,-\,\tfrac{1}{2}\,(\delta-b)_{i+\half,j-\half}
  \end{array} ~ &
  \begin{array}{l}
    (a-\delta)_{i+\half,j}
  \end{array} ~
  \\ \hline
  \mbox{\rule[-4ex]{0pt}{8ex}}
  \begin{array}{l}
    \tfrac{1}{2}\,(\delta+b)_{i-\half,j-\half}
  \end{array} &
  \begin{array}{c}
    (c-\delta)_{i,j-\half}
  \end{array} &
  \begin{array}{l}
    \tfrac{1}{2}\,(\delta-b)_{i+\half,j-\half}
  \end{array}
  \\ \hline
\end{array}}\label{eq:delta_stencil}
\end{equation}%
\renewcommand{\arraystretch}{1}%

\vspace{1.7mm}
\noindent
where the $x$-axis points to the right, and the $y$-axis to the top.
We assume that the diffusion tensor $\bm D$ is available in the 
staggered grid locations $\left(i \pm \frac{1}{2}, j \pm \frac{1}{2}\right)$. 
This is fairly natural if it relies on first-order derivatives, which 
can be computed with central differences in a $2 \times 2$ 
neighbourhood~\cite{WWW13}.
We  obtain values in $\left(i \pm \frac{1}{2}, j\right)$ and
$\left(i, j \pm \frac{1}{2}\right)$ by averaging:
\begin{equation}
 (a-\delta)_{i \pm \frac{1}{2}, j} \;=\;
 \half \left((a-\delta)_{i \pm \frac{1}{2}, j + \frac{1}{2}}
 \;+\; (a-\delta)_{i \pm \frac{1}{2}, j - \frac{1}{2}}\right) 
 \label{eq:ave}
\end{equation}
and similar for $(c-\delta)_{i, j \pm \frac{1}{2}}$.
Then the $\delta$-stencil family has consistency order two.

\medskip
\noindent
{\bf Incorporation of the Stencil Family of Weickert et al.~\cite{WWW13}.}
With \eqref{eq:ave} and $\delta = \alpha a + \beta b + \alpha c$, 
the $\delta$-stencil family comprises that of Weickert et al.~\cite{WWW13} 
that uses two parameters $\alpha$ and $\beta$. This shows that the parameters
of the latter contain redundancy which we remove with the $\delta$-stencil. 
Moreover, our stencil derivation is simpler than the one in \cite{WWW13} 
that has been obtained by discrete energy minimisation. In \cite{WWW13} 
it is shown that these stencils comprise seven discretisations from the 
literature. Since we are not aware of any second-order accurate 
discretisations on a $3 \times 3$ stencil that is not covered by this 
class, the $\delta$-stencil family may even be more general.
 
\section{Stability Theory for the $\bm\delta$-Stencil}
Let the \changed{symmetric} matrix $\bm A = \bm A(\bm u^k)$ act on an image 
$\bm u^k$ locally by applying the space-variant $\delta$-stencil. 
Weickert et al.~\cite{WWW13} have already established that $\bm A$ is 
negative semidefinite for $\alpha \le \half$ and $|\beta| \leq 
1-2\alpha$. They have also replaced $\beta$ by a parameter
$\gamma$ such that $\beta = \gamma (1\!-\!2\alpha)\,\text{sgn}(b)$
and $|\gamma|\leq 1$. Choosing $\alpha$ close to $\half$ and $\gamma$ close 
to $1$ improves rotation invariance and reduces dissipativity in experiments
\cite{WWW13}. In practice, parameters $\alpha < 0$ are irrelevant and
make a stability analysis more complicated. Thus, we exclude them from
now on. 

Consider an explicit anisotropic diffusion scheme  
$\,\bm u^{k+1} = (\bm I +\tau \bm A(\bm u^k))\,\bm u^k\,$ with unit 
matrix $\bm I$, time step size $\tau>0$, and a negative semidefinite 
matrix 
\changed{$\bm A \neq\bm0$ such that for the spectral norm $\rho(\bm 
A)>0$ holds}. 
Then stability in the Euclidean norm in terms of
$\,\|\bm u^{k+1}\|_2 \leq \|\bm u^k\|_2\,$ holds if 
\begin{equation}
\tau \;\leq\; \frac{2}{\rho(\bm A)} \,. \label{eq:tau_limit}
\end{equation}
We can bound $\rho(\bm A)$ as follows:

\begin{theorem}[Bound on Spectral Norm] \label{th:a_norm}
Let the eigenvalues of $\bm D$ be given by $\lambda_1 \ge \lambda_2 \ge 0$.
Assume that $\,\delta = \alpha(a\!+\!c) + \beta b\,$ where 
$\,\beta=\gamma(1\!-\!2\alpha)\,\textup{sgn}(b)\,$ for 
$\alpha \in [0, \half]$ and $|\gamma|\leq 1$.
Then the spectral norm of the matrix $\bm A$ satisfies
\begin{equation}
\rho(\bm A) \;\leq\; 
\frac{4\,(1\!-\!\alpha)\,(\lambda_1 \!+\! \lambda_2) 
 \,+\, 2\,(1-\gamma\,(1\!-\!2\alpha))\,(\lambda_1\!-\!\lambda_2)}{h^2}\;.
\end{equation}
\end{theorem}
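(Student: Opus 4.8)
The starting point is that $\bm A$ is symmetric and negative semidefinite, so $\rho(\bm A)=-\lambda_{\min}(\bm A)$, and it suffices to bound the most negative eigenvalue. I would localise this with Gershgorin's circle theorem: in the matrix row belonging to pixel $(i,j)$, the diagonal entry is the centre weight of the $\delta$-stencil \eqref{eq:delta_stencil} and the off-diagonal entries are its eight neighbour weights. If $R_{ij}$ denotes the Gershgorin radius (the sum of the moduli of these neighbour weights), then $\lambda_{\min}(\bm A)\ge\min_{i,j}\big(A_{(i,j),(i,j)}-R_{ij}\big)$, and hence $\rho(\bm A)\le\max_{i,j}\big(R_{ij}-A_{(i,j),(i,j)}\big)$.

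The decisive simplification comes from consistency: since the $\delta$-stencil discretises the divergence-form operator $\Div(\bm D\,\bm\nabla u)$, it annihilates constants, so the centre weight equals the negative sum of the eight neighbour weights. Writing $(x)_+:=\max(x,0)$, the diagonal term then cancels the signed neighbour weights and $R_{ij}-A_{(i,j),(i,j)}$ collapses to \emph{twice the sum of the positive parts} of the neighbour weights. Reading off the entries of \eqref{eq:delta_stencil} --- the axial diffusivities $w_0=a-\delta$ and $w_2=c-\delta$ occur twice with weight $1$, the diagonal diffusivities $w_1=\delta+b$ and $w_3=\delta-b$ twice with weight $\tfrac12$ --- and bounding each staggered coefficient by the corresponding directional diffusivity yields the intermediate estimate
\[
h^2\,\rho(\bm A)\;\le\;4\,(w_0)_+ \,+\, 4\,(w_2)_+ \,+\, 2\,(w_1)_+ \,+\, 2\,(w_3)_+ .
\]
The same inequality also follows from our directional splitting: each of the four 1-D diffusion matrices $\bm A_i$ satisfies $\lambda_{\min}(\bm A_i)\ge-\tfrac{4}{h^2}(w_i)_+$ on the axes and $\ge-\tfrac{2}{h^2}(w_i)_+$ on the diagonals (where the spacing is $h\sqrt2$), and summing via Weyl's inequality $\lambda_{\min}(\bm X+\bm Y)\ge\lambda_{\min}(\bm X)+\lambda_{\min}(\bm Y)$ gives the same bound; I find the Gershgorin route slightly shorter.

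It remains to turn this into the eigenvalue expression. Using $(x)_+=\tfrac12(x+|x|)$ together with $|x|+|y|=\max(|x{+}y|,|x{-}y|)$, I would rewrite the right-hand side as a trace part plus two maxima, namely $2(w_0{+}w_2)+(w_1{+}w_3)+2\max(w_0{+}w_2,\,|a{-}c|)+2\max(|\delta|,\,|b|)$. Inserting $\delta=\alpha(a{+}c)+\gamma(1{-}2\alpha)\,\textup{sgn}(b)\,b$ and the trace identity $a+c=\lambda_1+\lambda_2$, the trace part becomes $2(1{-}\alpha)(\lambda_1{+}\lambda_2)-2\gamma(1{-}2\alpha)|b|$. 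For the two maxima I would bring in the orientation relations $a-c=(\lambda_1{-}\lambda_2)\cos 2\theta$ and $2b=(\lambda_1{-}\lambda_2)\sin 2\theta$, which imply $w_0+w_2\ge0$, $|a-c|=\sqrt{(\lambda_1{-}\lambda_2)^2-4b^2}$, and $2|b|\le\lambda_1-\lambda_2$.

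The hard part is this final estimate, because the four directional diffusivities change sign with the orientation $\theta$ of $\bm D$, so the positive parts --- and with them the two maxima --- are genuinely coupled: a large $|b|$ simultaneously shrinks $|a-c|$, enlarges $\delta$, and feeds into $w_1,w_3$, so the terms cannot be maximised independently. I would therefore reduce everything to the single anisotropy parameter $t:=2|b|\in[0,\lambda_1{-}\lambda_2]$ and carry out a short case distinction according to the sign of $\gamma(1{-}2\alpha)$, whether $|\delta|\gtrless|b|$, and which argument dominates each maximum. In each regime the expression is piecewise linear (or controlled via $\sqrt{(\lambda_1{-}\lambda_2)^2-4b^2}\le\lambda_1{-}\lambda_2$), and the constraints $|\gamma|\le1$, $\alpha\in[0,\tfrac12]$ and $\lambda_1-\lambda_2\le\lambda_1+\lambda_2$ ensure that the maximum never exceeds $4(1{-}\alpha)(\lambda_1{+}\lambda_2)+2(1-\gamma(1{-}2\alpha))(\lambda_1{-}\lambda_2)$, with equality attained in the strongly anisotropic corner $\gamma=1$, $\lambda_2=0$. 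Keeping this case analysis short is the only genuinely delicate point.
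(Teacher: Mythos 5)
You begin exactly as the paper does: symmetry and negative semidefiniteness give $\rho(\bm A)=-\lambda_{\min}(\bm A)$, Gershgorin is applied row-wise, and since the centre entry of the $\delta$-stencil is the negated sum of the neighbour entries, each row bound becomes twice a sum of positive parts. The genuine gap comes in the very next step, when you rewrite $4(w_0)_+ + 4(w_2)_+ + 2(w_1)_+ + 2(w_3)_+$ using the identities $w_0+w_2=a+c-2\delta$, $w_0-w_2=a-c$ and $w_1+w_3=2\delta$. These identities presuppose that all eight neighbour weights are built from one and the same tensor $(a,b,c)$. They are not: the row of $\bm A$ at pixel $(i,j)$ involves the diffusion tensor at the four staggered locations $(i\pm\half,j\pm\half)$, and in the space-variant setting of the theorem these four tensors are independent. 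The paper is careful at exactly this point: it keeps only the three weights belonging to one corner coupled and bounds the row by $\max(M_+)+\max(M_-)$, where the two maxima may be attained by \emph{different} tensors. Your coupled, single-tensor maximum can be strictly smaller than what Gershgorin actually yields, so your intermediate estimate is not a consequence of Gershgorin at all --- and the discrepancy is quantitative, not cosmetic. Take $\alpha=0$, $\gamma=1$, $\lambda_1=1$, $\lambda_2=0$, so that $\delta=|b|$: your single-tensor expression maximises to $4$ (consistent with the theorem), but the row whose corners carry eigendirection angle $+\pi/8$ at $(i+\half,j+\half)$ and $(i-\half,j-\half)$ and angle $-\pi/8$ at the other two corners has all four diagonal entries equal to $\tfrac{\sqrt2}{4}$, both $(a-\delta)$-entries equal to $\tfrac12$, and both $(c-\delta)$-entries equal to $\tfrac{1-\sqrt2}{2}<0$, giving the Gershgorin value $2(1+\sqrt2)\approx 4.83>4$ (in units of $1/h^2$). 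So under the only reading that supports your algebra (one tensor) the intermediate bound is unjustified, while under the reading Gershgorin does justify (independent locations) your algebra is invalid; your Weyl/splitting alternative is weaker still, since it decouples all four directions and gives $12$ in this example.

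The second gap is that, even granting the intermediate bound, the quantitative heart of the proof --- the maximisation over $t=2|b|$ under the constraints $|\gamma|\le1$, $\alpha\in[0,\half]$, $t\le\lambda_1-\lambda_2$ --- is only announced, never carried out; by your own words it is ``the only genuinely delicate point''. All of the theorem's constants are produced in precisely that case distinction, so a proof that defers it establishes nothing. For comparison, the paper avoids any case analysis by a different device: inside each corner term $M_\pm$ it passes to the plain sum $m_\pm=a\pm b+c-\delta$, maximises this expression in the eigenvector parametrisation (maximiser $u=v=\tfrac{1}{\sqrt2}$), and then checks that all three weights are nonnegative there in order to identify $\max(M_\pm)$ with $2\max(m_\pm)$. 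Be aware that this identification of ``maximum of the sum of positive parts'' with ``maximum of the sum'' is exactly where the coupling issue bites: at the single-tensor orientation $\pi/8$ in the example above, one weight is negative and $M_+=1+\sqrt2>2=2\max(m_+)$, so the relation between these two maxima is the crux for any proof in this family. A completed argument has to confront it head-on rather than bypass it, and --- as the mixed-corner row shows --- it cannot be settled by worst-case row bounds alone.
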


\begin{proof}
For the considered choice of $\alpha$ and $\beta$, we know from~\cite{WWW13} 
that the symmetric matrix $\bm A$ is negative semidefinite. Thus, its 
spectral norm is determined by its smallest eigenvalue $\lambda_{\min}$ 
as $\,\rho(\bm A) = -\lambda_{\min}(\bm A)\,$.
Let us now bound $\lambda_{\min}(\bm A)$ with Gershgorin's circle 
theorem~\cite{We18}. 
As $\bm A$ applies the $\delta$-stencil, this theorem states that the smallest 
eigenvalue of $\bm A$ is bounded from below by the central stencil entry, 
minus the sum of absolute values of all other entries. Using \eqref{eq:ave} 
and grouping all terms by the four diffusion tensor locations 
$(i \pm \half, j \pm \half)$ gives
\begin{equation}
\scalemath{0.86}{
\begin{matrix*}[l]
  \rho(\bm A) \;\leq\; \frac{1}{2h^2}\max\limits_{\smash{a,b,c}} &\big(
  &((a-\delta) + |a-\delta| + (\delta+b) + |\delta+b| 
  + (c-\delta) + |c-\delta|)_{i-\half,j-\half} \vspace{-.5mm} \\
  &+&((a-\delta) + |a-\delta| + (\delta-b) + |\delta-b| 
  + (c-\delta) +  |c-\delta|)_{i-\half,j+\half}  \\
  &+&((a-\delta) + |a-\delta| + (\delta-b) + |\delta-b| 
  + (c-\delta) + |c-\delta|)_{i+\half,j-\half}  \\
  &+&((a-\delta) + |a-\delta| + (\delta+b) + |\delta+b| 
  + (c-\delta) + |c-\delta|)_{i+\half,j+\half} \, \big).
\end{matrix*}\label{eq:rho_limit_1}}
\end{equation}
Next, we bound the right hand side from above by assuming that the diffusion 
tensors at the different locations are independent. The notation 
\begin{equation}
M_\pm \;\coloneqq\;  
 (a-\delta) + |a-\delta| + (\delta\pm b) + |\delta\pm b| + (c-\delta) + 
 |c-\delta|
\end{equation} 
allows us to rewrite the bound as
\begin{equation}
\rho(\bm A) \;\leq\; 
 \frac{1}{h^2}\big(\max\limits_{a,b,c}(M_+) + \max\limits_{a,b,c} (M_-)\big)\,. 
\label{eq:rho_limit_2}
\end{equation}
In the following, we determine the maximum of $M_+$. Calculations for 
$M_-$ are analogous. 
\pagebreak[1] 
Notice that in $M_+$, the three terms 
$\,a-\delta$, $\,\delta + b$, and $\,c - \delta$ appear 
pairwise with their 
absolute values. This will simplify the calculation of the maximum. 
Consider the sum of the three terms:
\begin{equation}
m_+ \;\coloneqq\; (a-\delta) \,+\, (\delta+b) \,+\, (c-\delta) 
    \;=\; a+b+c-\delta \,. \label{eq:mplus}
\end{equation}
If $m_+$ has a maximum in which $a - \delta$, $\delta + b$, and $c - 
\delta$ are all nonnegative, then $\max(M_+) = 2\,\max(m_+)$, since 
$\,x+|x|=2x$ for $x \ge 0$.
We now proceed to show that such a maximum of $m_+$ exists. To this end, we 
rewrite the entries $a$, $b$, and $c$ of the positive semidefinite diffusion 
tensor~$\bm D$ in terms of its normalised eigenvectors $(u, v)^\top$, 
$(v, -u)^\top$ and their eigenvalues $\lambda_1 \geq \lambda_2 \geq 0$:
\begin{align}
 a = \lambda_1 u^2 + \lambda_2 v^2\,, \qquad
 b = (\lambda_1\!-\!\lambda_2)\, uv\,, \qquad
 c = \lambda_2 u^2 + \lambda_1 v^2\,. \label{eq:abc_decomp}
\end{align}
The possible ranges for eigenvalues may differ between diffusion models. 
Therefore, we determine the maximum of $m_+$, and by extension our limit on 
$\rho(\bm A)$, as a function of $\lambda_1$ and $\lambda_2$. This leaves the 
entries $u, v$ of the eigenvectors as the only variables to maximise $m_+$ 
over. Using \eqref{eq:abc_decomp} in \eqref{eq:mplus} gives
\begin{align}
\max_{u,v}\, (m_+) 
&\;=\; \max_{u,v}\big((1\!-\!\alpha)\,(\lambda_1 \!+\! \lambda_2) \,+\,   
  (1\!-\!\beta)(\lambda_1\!-\!\lambda_2)\,uv\big) \nonumber\\
&\hspace{-14mm} \;=\; \max_{u,v}\big((1\!-\!\alpha)\,
  (\lambda_1 \!+\! \lambda_2) 
  \,+\, (1\!-\!\gamma(1\!-\!2\alpha)\,\text{sgn}(uv))\,
  (\lambda_1\!-\!\lambda_2)\,uv \big) \nonumber\\
&\hspace{-14mm} \;=\; \max_{u,v}\begin{cases}
  (1\!-\!\alpha)\,(\lambda_1 \!+\! \lambda_2) + 
  \underbrace{(1-\gamma(1\!-\!2\alpha))}_{\geq 0}
  \underbrace{(\lambda_1\!-\!\lambda_2)}_{\geq 0}
  \underbrace{uv}_{> 0} & \mbox{for }\,uv > 0,\\
  (1\!-\!\alpha)\,(\lambda_1 \!+\! \lambda_2) + 
  \underbrace{(1+\gamma(1\!-\!2\alpha))}_{\geq 0}
  \underbrace{(\lambda_1\!-\!\lambda_2)}_{\geq 0}
  \underbrace{uv}_{\leq 0} & \mbox{for }\,uv\leq 0.\\
\end{cases}
\end{align}
The case where $uv> 0$ gives always larger results than the second one. 
Thus, 
\begin{equation}
\max_{u,v}\, (m_+) 
 \;=\; \max_{u,v} \big((1\!-\!\alpha)\,(\lambda_1 \!+\! \lambda_2) \,+\, 
  (1-\gamma(1\!-\!2\alpha))\,(\lambda_1\!-\!\lambda_2)\,uv\big)\,.
\end{equation}
We maximise the second term by maximising $uv$. For our 
normalised eigenvectors, $u^2+v^2=1$ holds. Hence, $\max(uv)=\half$ 
for $u=v=\pm \frac{1}{\sqrt{2}}$. 
Since we only need the maximal function value, we can consider only
$u=v=\frac{1}{\sqrt{2}}$. This gives
\begin{equation}
\max_{u,v}\, (m_+) 
 \;=\; (1\!-\!\alpha)\,(\lambda_1 \!+\! \lambda_2) 
  \,+\, \frac{1}{2}\,(1-\gamma(1\!-\!2\alpha))\,(\lambda_1\!-\!\lambda_2)\,.    
  \label{eq:mplusMax}
\end{equation}
We are not able to draw conclusions about the maximum of $M_+$ from the maximum 
of $m_+$ yet. It remains to show that $a - \delta$, $\delta + b$, and 
$c - \delta$ are all nonnegative in our maximum with 
$u=v=\frac{1}{\sqrt{2}}$. We start with $a-\delta$ and use 
\eqref{eq:abc_decomp}:
\begin{align}
 (a-\delta)|_{u=v=\frac{1}{\sqrt{2}}}
 &\;=\; 
  \big(a - \beta b - \alpha(a+c)\big)|_{u=v=\frac{1}{\sqrt{2}}} \nonumber\\
 &\;=\; \frac{1}{2} \, \big((\lambda_1 + \lambda_2) 
 -\gamma\underbrace{(1-2\alpha)}_{\geq0}\underbrace{(\lambda_1-\lambda_2)}_{\geq
   0} \,-\, 2\alpha(\lambda_1 + \lambda_2) \big)\nonumber\\[-0.5mm]
 &\;\geq\; \frac{1}{2} \, \big((\lambda_1 + \lambda_2) 
 -(1-2\alpha)(\lambda_1-\lambda_2) -2\alpha(\lambda_1 + \lambda_2) \big)
 \nonumber\\[1mm]
 &\;=\; (1-2\alpha)\, \lambda_2 \;\geq\; 0\,.
\end{align}
In a similar way, one shows $(c-\delta)|_{u=v=\frac{1}{\sqrt{2}}} \geq 0$.
For $b + \delta$ we verify  
\begin{align}
  (b+\delta)|_{u=v=\frac{1}{\sqrt{2}}} 
  &\;=\; \big((1+\beta)\,b + \alpha(a+c)\big)|_{u=v=\frac{1}{\sqrt{2}}}
    \nonumber\\
  &\;=\; \frac{1}{2}\,\underbrace{(1+\gamma(1-2\alpha))}_{\geq 
    0}\underbrace{(\lambda_1-\lambda_2)}_{\geq 0} \,+\;
  \alpha \, (\lambda_1+\lambda_2) \;\geq\; 0\,.
\end{align}
As all three terms are nonnegative in the maximum, we can conclude that
\begin{equation}
\max_{u,v}(M_+) \,=\, 2\max_{u,v}(m_+) 
 \,=\, 2\,(1\!-\!\alpha)\,(\lambda_1\!+\!\lambda_2)
 + (1-\gamma(1\!-\!2\alpha))\,(\lambda_1\!-\!\lambda_2)\,.
\end{equation}
Analogous computations lead to the same maximum for $M_-$. Inserting both into 
\eqref{eq:rho_limit_2} produces the claimed bound on the spectral norm. 
\qed 
\end{proof}

\smallskip
\noindent
Using Theorem \ref{th:a_norm} within \eqref{eq:tau_limit} directly 
gives the following time step size limit:
\begin{corollary}[Stability of Explicit Scheme] \label{th:ex}
An explicit anisotropic diffusion scheme 
$\,\bm u^{k+1} = \left(\bm I +\tau \bm A(\bm u^k)\right)\bm u^k$,
where $\bm A$ satisfies the assumptions of Theorem \ref{th:a_norm}
with $\lambda_1>0$, is stable in the Euclidean norm for 
\begin{equation}
 \tau \;\le\; \frac{h^2}{2\,(1\!-\!\alpha)\,(\lambda_1\!+\!\lambda_2) \,+\,
     (1-\gamma\,(1\!-\!2\alpha))\,(\lambda_1\!-\!\lambda_2)}\,.
\label{eq:bound}
\end{equation}
\end{corollary}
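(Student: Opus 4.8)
The plan is to obtain the corollary as an immediate combination of the spectral-norm bound from Theorem~\ref{th:a_norm} with the general Euclidean-norm stability criterion \eqref{eq:tau_limit} recorded just before the theorem. Since the entire content of the threshold \eqref{eq:bound} is already encoded in those two ingredients, the proof is essentially a substitution together with a short monotonicity argument, and the only points requiring genuine care are the positivity assumptions that make the relevant divisions well defined.

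First I would briefly re-establish why \eqref{eq:tau_limit} guarantees stability, as this is the conceptual core. The matrix $\bm A = \bm A(\bm u^k)$ is symmetric and negative semidefinite, so all its eigenvalues lie in $[-\rho(\bm A), 0]$ with $\rho(\bm A) = -\lambda_{\min}(\bm A)$. The one-step propagator $\bm I + \tau \bm A$ is then symmetric with eigenvalues $1 + \tau\mu$ for $\mu \in [-\rho(\bm A), 0]$, all contained in $[1 - \tau\rho(\bm A),\, 1]$. Requiring the spectral radius of the propagator to stay below one amounts to $1 - \tau\rho(\bm A) \ge -1$, i.e. $\tau \le 2/\rho(\bm A)$. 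Because a symmetric matrix has spectral norm equal to its spectral radius, this gives $\|\bm u^{k+1}\|_2 \le \|\bm I + \tau\bm A\|_2\,\|\bm u^k\|_2 \le \|\bm u^k\|_2$, which is exactly the assertion behind \eqref{eq:tau_limit}.

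Next I would feed the bound of Theorem~\ref{th:a_norm} into this criterion. Writing $B \coloneqq \frac{4(1-\alpha)(\lambda_1+\lambda_2) + 2(1-\gamma(1-2\alpha))(\lambda_1-\lambda_2)}{h^2}$ for the right-hand side of the theorem, we have $\rho(\bm A) \le B$. Monotonicity of $t \mapsto 1/t$ on the positive reals then yields $2/\rho(\bm A) \ge 2/B$, and a direct simplification shows that $2/B$ is precisely the fraction on the right-hand side of \eqref{eq:bound}. Hence any $\tau$ obeying \eqref{eq:bound} satisfies $\tau \le 2/B \le 2/\rho(\bm A)$, so the criterion applies and stability follows.

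The one obstacle worth flagging, though it is mild, is ensuring that all these divisions are legitimate and order-preserving. This is where the hypothesis $\lambda_1 > 0$ enters: it keeps the denominator $B$ (hence the threshold in \eqref{eq:bound}) strictly positive and rules out the degenerate case $\bm A = \bm 0$, so that $\rho(\bm A) > 0$ as required by \eqref{eq:tau_limit} and the passage $2/\rho(\bm A) \ge 2/B$ is valid. For a genuinely nonlinear, space-varying tensor I would additionally read $\lambda_1, \lambda_2$ as uniform eigenvalue bounds over the grid, since the bound of Theorem~\ref{th:a_norm} was derived by maximising over the tensor entries at each location independently; the same $\tau$ then works simultaneously for every iteration matrix $\bm A(\bm u^k)$.
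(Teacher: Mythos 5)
Your proposal is correct and follows essentially the same route as the paper, which obtains the corollary by inserting the bound of Theorem~\ref{th:a_norm} directly into the stability criterion \eqref{eq:tau_limit}. Your additional justification of \eqref{eq:tau_limit} via the eigenvalues of the symmetric propagator $\bm I + \tau\bm A$, and your remark on why $\lambda_1>0$ keeps the threshold positive, only make explicit what the paper takes as given.
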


\noindent
While our proof does not guarantee that this bound is strict, our 
practical experience suggests that it is. 
Corollary \ref{th:ex} covers two important special cases:
\begin{enumerate}
\item
In the {\bf homogeneous diffusion case} with $\lambda_1=\lambda_2=1$, this
time step size limit simplifies to $\,\tau \le \frac{h^2}{4(1-\alpha)}$.
Moreover, setting $\alpha \coloneqq 0$ turns the $\delta$-stencil into
the standard five point approximation of the Laplacian, which leads
to the well-known 2-D time step size limit $\,\tau \le \frac{h^2}{4}$; see
e.g.~\cite{MM05}.

\vspace{1.6mm}
\item
In the {\bf maximally anisotropic case} with $\lambda_1=1$, $\lambda_2=0$, 
and $\gamma=1$, one performs 1-D diffusion along one eigendirection of 
$\bm D$. Then \eqref{eq:bound} becomes $\,\tau \le \frac{h^2}{2}$. 
In spite of being in a 2-D setting, this coincides with the typical 1-D 
time step size limit \cite{MM05}, which is less restrictive. This shows 
that our scheme 
takes full advantage of the anisotropy. In image analysis, this result is 
relevant for coherence-enhancing nonlinear diffusion filters \cite{We97}.
Similar findings have also been made with a recent numerical scheme
for a maximally anisotropic backward parabolic PDE \cite{SW22}.
\end{enumerate}

\section{Translating Anisotropic Diffusion into ResNets}

Let us now interpret our explicit scheme in the context of neural networks.
This extends the result of Alt et al.~\cite{ASAPW21} from the 1-D 
setting to the 2-D anisotropic case. We start by presenting their translation 
of 1-D diffusion into a ResNet block, and then build the anisotropic 
ResNet block from there. 

\medskip
\noindent
\textbf{ResNets.} Residual networks (ResNets)~\cite{HZRS16} are very popular 
neural network architectures. They use ResNet blocks that compute an
output $\bm u^{k+1}$ from an input $\bm u^k$ by
\begin{equation}
\bm u^{k+1} \;=\; 
\sigma_2\left(\bm u^k + \bm K_2\, \sigma_1
  \left(\bm K_1\bm u^k + \bm b_1\right)+\bm b_2\right) 
\label{eq:resnet_block}
\end{equation}
for discrete convolution kernels $\bm K_1, \bm K_2$, bias vectors 
$\bm b_1, \bm b_2$, and nonlinear activation functions $\sigma_1, \sigma_2$
such as the ReLU function $\sigma(x)=\max\{x,0\}$. 
Adding the input $\bm u^k$ before applying the second activation $\sigma_2$ 
helps to avoid vanishing gradients and to improve stability. This allows to 
train very deep networks.
\begin{figure}[!tb]
\centering
\begin{tabular}{ccc}
\includegraphics[height=60mm]{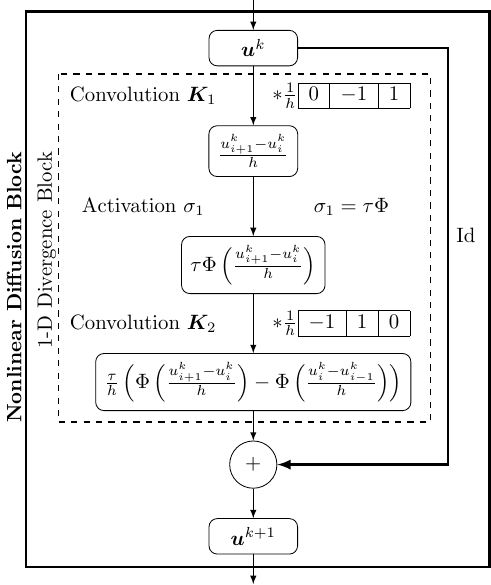}
&\hspace*{5mm}&
\includegraphics[height=60mm]{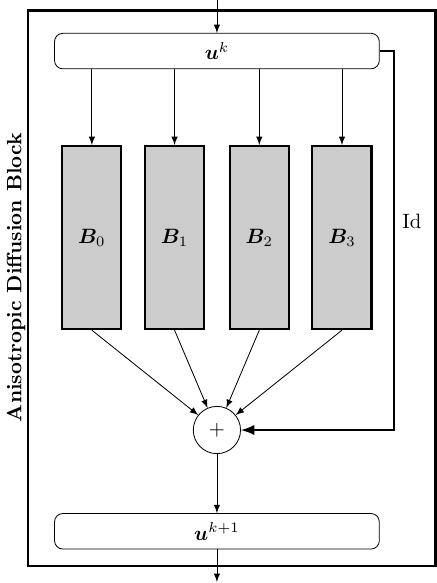}
\end{tabular}
\caption{\textbf{(a) Left}: Translation of 1-D nonlinear diffusion into a 
  ResNet block. Adapted from~\cite{ASAPW21}. 
  \textbf{(b) Right}: Anisotropic diffusion as a ResNet block with a sum 
  of four 1-D divergence blocks. The blocks $\bm{B}_0,...,\bm{B}_3$ correspond 
  to the directions $\bm{e}_0,...,\bm{e}_3$.}
\label{fig:isotropicblock}
\end{figure}

\medskip
\noindent
\textbf{Translating 1-D Diffusion into ResNets.} The basic translation 
of 1-D diffusion into ResNets is surprisingly simple \cite{ASAPW21,RH20}: 
In vector notation, the explicit scheme \eqref{eq:iso_explicit} becomes
\begin{equation}
 \bm u^{k+1} \;=\; 
 \bm u^k + \tau \bm D_h^-\left(\Phi\left(\bm D_h^+ \bm u^k\right)\right).
 \label{eq:exflux}
\end{equation}
Here, $\bm D_h^+$ and $\bm D_h^-$ represent matrices computing forward 
and backward first order derivative approximations with grid size $h$. 
Comparing \eqref{eq:exflux} to the ResNet block \eqref{eq:resnet_block} 
reveals that both perform the same computations when identifying
\begin{equation}
\bm K_1 = \bm D_h^+, \qquad 
\sigma_1 = \tau\Phi, \qquad 
\bm K_2 = \bm D_h^-, \qquad 
\bm b_1 = \bm b_2 = \bm 0, \qquad 
\sigma_2 = \text{Id}\,.
\end{equation}
The computational graph for this is shown in Figure 
\ref{fig:isotropicblock}(a).

Alt et al.~\cite{ASAPW21} use this connection to advocate ResNet 
architectures with mirrored kernels $\bm K_1$ and $\bm K_2$ to guarantee 
stability in the Euclidean norm. Moreover, their experiments show 
advantages of nonmonotone activation functions.

\medskip
\noindent
\textbf{Translating 2-D Anisotropic Diffusion into ResNets.} 
Our directional splitting allows also a natural translation of anisotropic 
diffusion into ResNets. We split the divergence term of 2-D aniso\-tropic 
diffusion into a sum of four divergence terms of 1-D diffusion 
processes, and use the previous translation for each. This is illustrated in 
Figure~\ref{fig:isotropicblock}(b). By appropriately concatenating 
the 2-D convolution kernels into 4-D tensors, we match the ResNet 
structure precisely. 

\medskip
\noindent
{\bf Experiments.}
Implementing numerical schemes for GPUs using CUDA can be labour-intensive 
and requires expertise. However, deep learning frameworks are capable of 
fully automatic and efficient parallelisation of user code. As we were able 
to decompose our discretisation into neural network primitives, we can use 
these frameworks to obtain an efficient implementation with little effort.

\changed{As a prototypical anisotropic diffusion process as described in 
\eqref{eq:aniso_continuous} we use 
edge-enhancing image diffusion (EED) \cite{We97}, for which we consider $10$ 
iterations of an explicit scheme. We compare three implementations:}
The first uses C and runs on the CPU. It computes entries of the 
$\delta$-stencil before applying it to the image. The second is an 
implementation in the PyTorch framework which follows the same strategy. As 
this style is uncommon in most neural networks, the implementation is fairly 
involved. The third also uses PyTorch, but follows our ResNet translation. It 
only requires two convolutions, one activation function, and a summation. This 
leads to a concise and simple implementation.

For an image with $2048 \times 2048$ pixels, our C code takes $1.6\,s$ on 
an AMD 5800X CPU. Both PyTorch implementations perform one order of 
magnitude faster at $0.16\,s$ and $0.15\,s$ respectively on an Nvidia 
3090 GPU. 
This demonstrates that our ResNet translation is able to significantly 
accelerate EED with a straightforward parallel implementation. It is 
even as fast as the much more involved stencil-based GPU implementation. This 
behaviour is consistent across image and batch sizes, provided that the total 
pixel count is sufficiently large.
\section{Conclusions}
We have explored three aspects of anisotropic diffusion stencils. The first 
was an intuitive derivation of a large second-order stencil family  
based on directional splitting. While it covers the full stencil class of 
Weickert et al.~\cite{WWW13}, its derivation is simpler, and it requires 
only one free parameter ($\delta$) instead of two. Therefore, we call it
the $\delta$-stencil family.

Secondly, we have established a rigorous spectral norm estimate of the
matrix associated to this stencil family. It allows to derive fairly
tight time step size limits of explicit schemes to guarantee stability 
in the Euclidean norm. We have restricted ourselves to explicit 
schemes, since they are structurally similar to feedforward neural 
networks. Moreover, they form the backbone of acceleration methods based 
on super time stepping~\cite{WGSB16} and extrapolation concepts~\cite{HOWR16}.
Further multigrid-like acceleration options may arise from multiscale
network features such as pooling operations and U-net structures 
\cite{ASAPW21}.

Thirdly, the directional splitting from our derivation has been 
instrumental in linking anisotropic diffusion to ResNets. It paves the 
road to an effortless and efficient parallelisation with libraries such 
as PyTorch. This also illustrates the usefulness of neural networks 
outside their original field of machine learning.

In our ongoing work, we also address the reverse direction: We 
investigate how neural architectures can benefit from the integration 
of anisotropic diffusion. 

\medskip
\noindent
\textbf{Acknowledgements.} We thank Kristina Schaefer for careful 
proofreading. 

\bibliographystyle{bibtex/splncs03.bst}
\bibliography{myrefs.bib}
\end{document}